\newcommand\fs@norules{\def\@fs@cfont{\bfseries}\let\@fs@capt\floatc@ruled
  \def\@fs@pre{}%
  \def\@fs@post{}%
  \def\@fs@mid{\kern3pt}%
  \let\@fs@iftopcapt\iftrue}
\newtheorem{theorem}{Theorem}
\DeclareMathOperator{\kl}{KL}
\newcommand{\bV}{\mathbb{V}}
\newcommand{\cN}{{\cal N}}
\DeclareMathOperator*{\argmin}{arg\,min}
\def\BibTeX{{\rm B\kern-.05em{\sc i\kern-.025em b}\kern-.08em
    T\kern-.1667em\lower.7ex\hbox{E}\kern-.125emX}}
\begin{document}
\title{Power Grid Reliability Estimation via Adaptive Importance Sampling}
\author{Aleksander Lukashevich, \IEEEmembership{Student  Member, IEEE}, and Yury Maximov, \IEEEmembership{Member, IEEE}
\thanks{The work of Yury Maximov at LANL is supported by the U.S. DOE Advanced Grid Modeling program as a part of ``Robust Real-Time Control, Monitoring, and Protection of Large-Scale Power Grids in Response to Extreme Events''  and LANL LDRD~projects. }
\thanks{Aleksander Lukashevich is with 
the Center for Energy Science and Technology, Skolkovo Institute of Science and Technology, 
Moscow, Russia (e-mail: Aleksandr.Lukashevich@skoltech.ru)}
\thanks{Yury Maximov is with the Theoretical Division, Los Alamos National Laboratory, 
Los Alamos, NM 87545 USA (e-mail: yury@lanl.gov)}}

\maketitle

\begin{abstract}
    Electricity production currently generates approximately 25\% of greenhouse gas emissions in the USA. Thus, increasing the  amount of renewable energy is a key step to carbon neutrality. However, integrating a large amount of fluctuating renewable generation is a significant challenge for power grid operating and planning. Grid reliability, i.e. an ability to meet operational constraints under power fluctuations, is probably the most important of them. In this paper, we propose computationally efficient and accurate methods to estimate the probability of failure, i.e. reliability constraints violation, under a known distribution of renewable energy generation. To this end, we investigate an importance sampling approach, a flexible extension of Monte-Carlo methods, which adaptively changes the sampling distribution to generate more samples near the reliability boundary. The approach allows to estimate failure probability in real-time based only on a few dozens of random samples, compared to thousands required by the plain Monte-Carlo. Our study focuses on high voltage direct current power transmission grids with linear reliability constraints on power injections and line currents. We propose a novel theoretically justified physics-informed adaptive importance sampling algorithm and compare its performance to state-of-the-art methods on multiple  IEEE power grid test cases.
\end{abstract}

\begin{IEEEkeywords}
power system security, power system control, power system faults, sampling methods, fluctuations
\end{IEEEkeywords}

\section{Introduction}
\label{sec:introduction}

Carbon-free electricity generation is one of the most vital global challenges for the next decades. Because of their ecological and economic benefits, renewable energy sources, such as wind, hydro, and solar power generation, become more demanded, accessible, and widely used in modern power grids  \cite{gielen2019role,harjanne2019abandoning}. For instance, California's renewable portfolio standard currently requires 33\% of retail electricity sales to come from renewable resources, and will require 60\% by 2030, and 100\% by~2045~\cite{golden2003senate}.
However, renewable energy generation is highly volatile and brings significant uncertainty to power systems. This also gives rise to many challenges for power system operators trying to integrate renewables into power grids \cite{schmietendorf2017impact,liang2016emerging}. In particular, power systems operational policies and reliability assessment must be verified over various additional uncertainties, including increased variation in power generation and disturbances. 

Various algorithms have been developed so far for ensuring grid reliability. Some of them are based on machine learning, and utilize historical data about weather, renewables' generation, and grid operating parameters to estimate the risk of failure and influence of uncertainty due to whether changes~\cite{zhang2017data,contingencyMLE}.
%3contingencymethods,
%,zhou2019wind
Requiring large datasets and high data collecting time to make an accurate prediction, machine learning methods become impractical for real-time operation if a large disturbance, contingency, or a sudden operational policy change precedes the reliability assessment. Another class of algorithms is based on analytical approximation of the failure probability~\cite{nemirovski2007convex}. In this approach, the risk of interest is upper-bounded by an integral of an appropriate function that admits analytical or numerical computation. However, even in the simplest case of linear reliability constraints and Gaussian fluctuations of renewables, existing approaches tend to overestimate the risk. Moreover, for a sufficiently rare event, the risk overestimation for these algorithms can be infinitely high~\cite{nemirovski2007convex} which compromises their practical~efficiency. 

Finally, algorithms based on sampling values of power generated by renewables and approximating the grid failure probability by its empirical counterpart often provide a valuable alternative for accessing the reliability posture of a power system. Monte-Carlo (MC), hybrid and Markov Chain MC have been earlier applied to risk-based reliability assessment of transmission power grids~\cite{su2005probabilistic,vittal2009steady,yu2009probabilistic,chen2008probabilistic}. 
%da1990probabilistic,
In~\cite{montecarlofeasibility}, the authors exploited Monte Carlo simulation for estimating failure probability and interpreted the risk by classifying it into low, medium and high risk operating points. Variations of the load-flow solution due to renewables fluctuation, nodal and line parameters uncertainty were considered in~\cite{su2005probabilistic,yu2009probabilistic}.  An inverse problem of wind turbine controls to meet reliability margins with high-probability is discussed in \cite{vittal2009steady}.  A comprehensive survey of sampling-based methods for power systems reliability assessment is given in~\cite{chen2008probabilistic}.
Unfortunately, these algorithms explore the space of fluctuations uniformly, which dramatically 
reduces their performance in understanding and evaluating the effect of a rare event such as a severe disturbance. 

Importance sampling is a valuable alternative to Monte-Carlo sampling, which allows adjusting distribution for generating more samples in the areas of interest, e.g., close to the reliability boundary. Pmvnorm~\cite{genz2020package} is one of the most efficient importance sampling algorithms in general, but its performance is often limited for rare events probability estimation~\cite{owen2019importance}, which is of the utmost importance to power systems study.  ALOE~\cite{owen2019importance}~is another efficient method designed especially for computing a rare event probability. However, it does not fully respect the geometry of reliability constraints. 
It thus requires a large number of samples to estimate the risk of failure, especially for large power grids and multi-line failures. Finally, a convex optimization-based algorithm for adaptive importance sampling from exponential families was proposed in~\cite{ryu2014adaptive}. At each step, the algorithm adjusts the distribution parameters so that the sampler's variance is minimized. Unfortunately, the distribution of output power that leads to a failure is far from the exponential family,  limiting the algorithm's efficiency in power systems.

This paper proposes an adaptive importance sampling method to efficiently estimate the risk of reliability constraints violation.
We present an importance sampling algorithm that uses physical information to generate a mixture of distributions to sample from and then uses convex optimization to iteratively adjust the weights of the mixture. Our algorithm substantially improves static weights assignment of ALOE~\cite{owen2019importance} when reliability constraints are highly correlated.
The approach allows to address the risk estimation problem in real-time even for large power grids with a small failure probability. We theoretically analyze the accuracy and complexity of our algorithm for the case of Gaussian power fluctuations from renewables; however, the technique is not limited to the Gaussian case. Finally, we evaluate the performance of our sampling methods over multiple real and synthetic test cases and compare it to the state-of-the-art. 

The paper is organized as follows. 
In Section \ref{sec:setup} we present the failure probability estimation problem and introduce notation used in the paper. We outline the importance sampling algorithm and present its theoretical analysis in Section \ref{sec:prob}. Empirical study and comparison to the state-of-the-art are given in Section~\ref{sec:emp}. In Section~\ref{sec:conclusion} we conclude  with a brief summary and discussion on possible applications of our results. 

\section{Background and Problem Setup}\label{sec:setup}

Being a popular load flow model, the higher-voltage direct current (DC) model remains simple for the analysis because of linear relations between power injections and phase angles. Let $G = (V, E)$ be a power grid graph with a set of buses $V$, $|V| = n$ and a set of lines $E$, $|E| = m$. Let $p \in \mathbb{R}^{n}$ and $\theta\in\mathbb{R}^{n}$ be vectors of power injections and phase angles respectively.  The power system is balanced, e.g., the sum of all power injections equals zero $\sum_{i \in V} p_i = 0$. To avoid ambiguity let $s$ be the slack bus and $\theta_s = 0.$ Let $B \in \mathbb{R}^{n \times n}$ be an admittance matrix of the system, $p = B\theta$. The components $B_{ij}$ are such that $B_{ij} \neq 0$ if there is a line between buses $i$ and $j$ and  for any node $B_{ii} = - \sum_{j\neq i} B_{ij}$, e.g., $B$ is a Laplacian matrix.  Let $B^{\dagger}$ be the pseudo-inverse of $B$, $\theta = B^{\dagger} p$. The DC power flow equations, generation and reliability constraints are then
\begin{subequations}
\label{eq:DC-PF}
    \begin{equation}
        p = B \theta
        \label{eq:DC-PF-a}
    \end{equation}
    \begin{equation}
        \underline{p}_i \leq p_i \leq \overline{p}_i
        \label{eq:DC-PF-b}, i \in V \text{ and } |\theta_i - \theta_j| \leq \bar{\theta}_{ij}, \; (i,j)\in E
    \end{equation}
    %\begin{equation}
    %    |\theta_i - \theta_j| \leq \bar{\theta}_{ij}, \; (i,j)\in E
    %    \label{eq:DC-PF-c}
    %\end{equation}
\end{subequations}
Reliability constraints~\eqref{eq:DC-PF-b} %, \eqref{eq:DC-PF-c} 
define a polytope $P$ in the space of power injections, $P = \{p: W p \le b\}$, so that the reliability constraints are violated if and only if power injections $p \not\in P.$ 

To derive an explicit expression of matrix $W$ we consider the incidence matrix $A$, such that for any buses $i$ and $j$ with $i<j$ connected by an edge $k$, $A_{ki} = 1$, $A_{kj} = -1$ and all other elements in row $k$ are equal to zero. Then the phase angle constraints are $AB^{\dagger}p \le \bar\theta$, $-AB^{\dagger}p \le \bar\theta$. 
Finally, as the slack bus balances the system, $p_s = -\sum_{i\neq s} p_i$ let $C \in \mathbb{R}^{n\times n}$ be a symmetric matrix such that for any non-slack buses $i$ and $j$, and the slack bus $s$, $C_{ii} = 1$, $C_{ij} = 0$, $C_{ss}=0$, and $C_{is} = -1$. In other words, $C p$ is a vector of grid power injections expressed in terms of non-slack injections only, since the slack bus power injection is fully determined by the other ones.

Finally, from Eqs.~\eqref{eq:DC-PF} the following system of inequalities defines the reliability polytope, $P = \{p: Wp \le b\}$,
\begin{equation}
(AB^\dagger C, - A B^\dagger C, C, -C)^\top p \le (\bar\theta, \bar\theta, \overline{p}, \underline{p})^\top, 
\label{eq:feasibility_ineqs}
\end{equation}
where $W = (AB^\dagger C, - A B^\dagger C, C, -C)^\top$ and $b = (\bar\theta, \bar\theta, \overline{p}, \underline{p})^\top$. Let $J = 2m + 2n$ be a number of constraints, e.g., rows in matrix $W$, then the reliability polytope $P$ is $\bigl\{p:\; \bigcap_{i=1}^J p^\top\omega^i \le b_i\bigr\}$. 

Stochastic uncertainty in renewable generation and power consumption imposes a question of power grid reliability, e.g., estimating a probability that at least one of the reliability constraints is violated. Namely, we consider Gaussian fluctuations of power injections $p$ with known mean $\mu$ and covariance $\Sigma$ and aim at computing a failure probability $\Pi$:
\begin{align}\label{eq:prob}
    \Pi = \mathbb{P}(p\not\in P) = \int_{\mathbb{R}^n} \upsilon(p) 1[p\not\in P] d p, \; p\sim \cN(\mu, \Sigma), 
\end{align}
where $\mathbb{P}$ is a probability taken w.r.t. the normal pdf $\upsilon(p)$ of $p$. 

Notice, that the probability $\Pi$ does not have an analytic expression, is computationally intractable, and even hard to approximate~\cite{owen2019importance,ryu2014adaptive,cappe2008adaptive, khachiyan1989problem}. 
In practice, a union bound is often used to upper bound $\Pi$. Let $\Pi_i$ be a probability of a single event, e.g., $p^\top\omega^i > b_i$. It has an explicit expression for the Gaussian distribution, and by union bound inequality
$\sum_{i\le J}\Pi_i \ge \Pi \ge \max_{i\le J} \Pi_i$; however, the bounds are loose when dealing with correlated failures which is often the case for power systems. 

To refine the failure estimate and take into account simultaneous violation of multiple constraints, we propose an importance sampling procedure that allows to count the average number of constraints $N$ violated at the same time and, thus, improve the failure probability estimation to $\Pi/N$ instead of $\Pi$. It is meaningful for large power grids where multiple events are likely to happen synchronously.

Table~\ref{tab:notation} summarizes paper's notation. We use lower indices for elements of vectors and matrices, lower-case letters for probability density functions (pdfs), and upper-case letters for cumulative distribution functions (cdfs). When it does not lead to confusion, we use $\mathbb{P}$, $\mathbb{E}$, and $\mathbb{V}$ to denote  probability, expectation, and variance without explicitly mentioning a distribution. 

\begin{table}[t]
    \centering
    \caption{Paper notation. }
    \begin{tabular}{l|l|l|l}
        $E$ & set of lines, $|E| = m$ & $\upsilon(p)$ & nominal distribution pdf\\
        $V$ & set of buses, $|V| = n$ & $\upsilon_D$ & parametric distribution pdf\\
        $B$ & $n\times n$ admittance matrix& $x$ & mixture distribution para- \\
        $p_i$ & power injection & & meters, $x\in X \subseteq \mathbb{R}^J$\\
        $\underline{p}_i$ & lower generation limit & 
        $D_i$& $p\!\sim\!\cN(\!\mu, \!\Sigma)$ s.t. $p^\top\! \omega^i \!> \!b_i$\hspace{-5mm}\\
        $\overline{p}_i$ & upper generation limit & $N$ & number of samples\\
        $\theta_i$ & phase angle & $\mathbb{P}$, $\mathbb{E}$ & probability, expectation\\
        $\theta_{ij}$ & phase angle difference%, $\theta_i - \theta_j$
        &$\mathbb{V}$, $\kl$ & Variance, KL-divergence\\
        $\bar\theta_{ij}$ & angle difference limits & ${\cal N}\!(\mu, \!\Sigma)\hspace{-5mm}$ &  Gaussian distribution with\\
        $I_n$ & $n\times n$ identity matrix & & mean $\mu$ and covariance $\Sigma$\\
        $J$ & number of constraints & $\Phi$ & ${\cal N}(0,1)$ distribution cdf\\ 
        ${P}$ & reliability set, $p\!:\! W p \!\le\! b\!\!\!\!$ & $U(0,1)$ & uniform $(0,1)$ distribution\\
        $\omega^i$ & rows of matrix $W$, $i\le J$ & $\Pi$ & failure probability, $p\not\in P$
    \end{tabular}
    \label{tab:notation}
    \vspace{-4mm}
\end{table}

\section{Failure Probability Estimation}\label{sec:prob}

\subsection{A Single Constraint Case}
We start with estimating the probability of fluctuating power injections to cause a failure of an individual constraint, e.g., $\Pi_i = \mathbb{P}(p^\top \omega^i \ge b_i)$ for some $i$, $1\le i \le J$. In the case of a Gaussian distribution, $p\sim \cN(\mu,\Sigma)$ there is a closed form expression for it:
\begin{align*}
\Pi_i & = \mathbb{P}_{p\sim \cN(\mu, \Sigma)}(p^\top \omega^i \ge b_i)  = \mathbb{P}((p - \mu)^\top \omega^i \ge b_i - \mu^\top \omega^i)\\
& = \mathbb{P}_{{\tilde p}\sim \cN(0, I_n)} \left((\Sigma^{1/2}\omega^i)^\top {\tilde p} \ge b_i - \mu^\top \omega^i\right) \\
& = \mathbb{P}_{{\tilde p}\sim \cN(0, I_n)} \left( {\tilde p}^\top {\bar\omega}_i \ge (b_i - \mu^\top \omega^i)/\|\Sigma^{1/2}\omega^i\|_2\right)  \\
& = \Phi((b_i - \mu^\top \omega^i)/\|\Sigma^{1/2}\omega^i\|_2), 
\end{align*}
where $\Phi$ is the standard Gaussian distribution cdf, ${\tilde p} = \Sigma^{-1/2}(p-\mu)$, ${\bar\omega}_i = (\Sigma^{1/2}\omega^i)^\top/\|\Sigma^{1/2}\omega^i\|_2$, and $1\le i \le J$. $\Pi_i$ are the probabilities of $p^\top \omega^i \ge b_i$, so that 
$
    \Pi = \mathbb{P}(\exists i: p^\top \omega^i \ge b_i) \le \sum_{i\le J}\mathbb{P}(p^\top \omega^i \ge b_i) = \sum_{i\le J} \Pi_i.
$

Algorithm~\ref{alg:sample1d} is an instance of the inverse transorm method \cite{l2009monte} which allow to sample $p \sim \cN(\mu, \Sigma)$ s.t. $p^\top \omega^i > b_i$. We refer this distribution as $D_i$, and its pdf is $\upsilon(p)/\Pi_i$ if $p^\top \omega^i > b_i$ and $0$ otherwise. Notice, that sample $p \sim \cN(\mu, \Sigma)$ can be obtained with the plain MC from $\cN(\mu, \Sigma)$, but it requires on average $1/\Pi_i$ trials instead of just one for Algorithm~\ref{alg:sample1d}.

\begin{algorithm}[H]
 \caption{Sampling $p\sim\cN(\mu,\Sigma)$ conditioned on $p^\top \omega^i \geq b_i$}
 \label{alg:sample1d}
 \begin{algorithmic}[1]
 \renewcommand{\algorithmicrequire}{\textbf{Input:}
 }
 \REQUIRE Mean $\mu$, covariance $\Sigma$, and a constraint $p^\top \omega^i \le b_i$.
 \renewcommand{\algorithmicensure}{\textbf{Output:}}
 \ENSURE  $p\sim\cN(\mu,\Sigma)$ s.t. $p^\top \omega^i \ge b_i$
 %\\ \textit{Initialisation}: 
  \STATE  Sample $z \sim \cN(0, I_n)$ and $u \sim U(0,1)$
  \STATE  Compute $y = \Phi^{-1}(\Phi(\tau) + u(1 - \Phi(\tau)))$
  \STATE Set $\phi = \bar\phi y + (I_n - \bar\phi\phi^\top) z$, with $\bar\phi = \Sigma^{1/2} \omega^i / \|\Sigma^{1/2} \omega^i\|_2$
  \RETURN $p = \Sigma^{1/2} (\phi+\mu)$
 \end{algorithmic}
 \end{algorithm}
 
 \subsection{Multiple Constraints Case}
 
The case of multiple constraints is more involved. Indeed, there is no analytical formula for a failure probability and, moreover, its exact computation is intractable~\cite{khachiyan1989problem}. 
%The plain 
Monte-Carlo sampling, $p\sim \cN(\mu, \Sigma)$ is inefficient in estimating the failure probability, especially if it is small. Indeed, it requires on average $O(1/\Pi)$ samples to get at least one of the outside the reliability polytope, $p\not\in P$. 

The importance sampling idea is to change the distribution one samples from and assign a weight to each sample to account for the~change: 
\begin{align*}
    \Pi & = \mathbb{P}(p\not\in P) = \int_{\mathbb{R}^n} f(p) \upsilon(p) dp \\
    & = \!\int_{\mathbb{R}^n}\!\! \frac{f(p)\upsilon(p)}{\upsilon_D(p,x)}\upsilon_D(p,x) dp \approx \frac{1}{N}\sum_{i=1}^N \frac{f(p^i)\upsilon(p^i)}{\upsilon_D(p^i,x)}, p^i \sim \upsilon_D(p,x),
\end{align*}
where we refer to $\upsilon(p)$ as \emph{nominal} distribution, and $\upsilon_D(p,x)$ as \emph{synthetic} distribution with parameter $x$, and $f(p) = 1[p\not\in P]$. 

A natural extension of importance sampling with a single linear constraint to the case of multiple linear constraints is to sample from a mixture distribution: 
\begin{align}\label{eq:mix}
    D = \sum_{i \le J} x_i D_i, \text{ with } \sum_{i\le J} x_i = 1, x_i \ge 0, \; 1\le i \le J,
\end{align}
where $D_i$ is $\cN(\mu,\Sigma)$ conditioned on $p^\top\omega^i > \!b_i$. The sampling algorithm consists of two steps. First, we choose a distribution $D_i$ with probability $x_i$. Second, we sample $p \sim D_i$, i.e. $p\sim \cN(\mu,\Sigma)$ given $p^\top\omega^i>b_i$, according to Algorithm~\ref{alg:sample1d}.

Probability density function $\upsilon_D(p, x)$ of $D$ is given by
\begin{align*}
    \upsilon_D(p, x) = \begin{cases}
    0, & \!\!p \in P,\\
    \sum_{i\le J} x_i \upsilon_i(p) 1[p^\top \omega^i > b_i], & \!\!p\not\in P,
    \end{cases}
\end{align*}
where $1[\cdot]$ is an indicator of an event, and 
\begin{align*}
    \upsilon_i(p) = \frac{\upsilon(p)}{\Phi((b_i - \mu^\top\omega^i)/\|\Sigma^{1/2}\omega^i\|_2)}  1[p^\top\omega^i > b_i],
\end{align*}
In contrast to the classical Monte-Carlo, which explores the uncertainty space uniformly according to the nominal distribution, importance sampling from parametric distribution $\upsilon_D(p,x)$ yields samples only from the area of interest, i.e., $p\not\in P$. 
More specifically, Monte-Carlo generates many samples from the true distribution of power injections to estimate failure probability, while the proposed approach only samples power injections that lead to a failure and adjusts their weights.
Figure \ref{fig:00} illustrates the difference. 

\begin{figure}
    \centering
    \vspace{-3mm}
    \includegraphics[width=.5\textwidth]{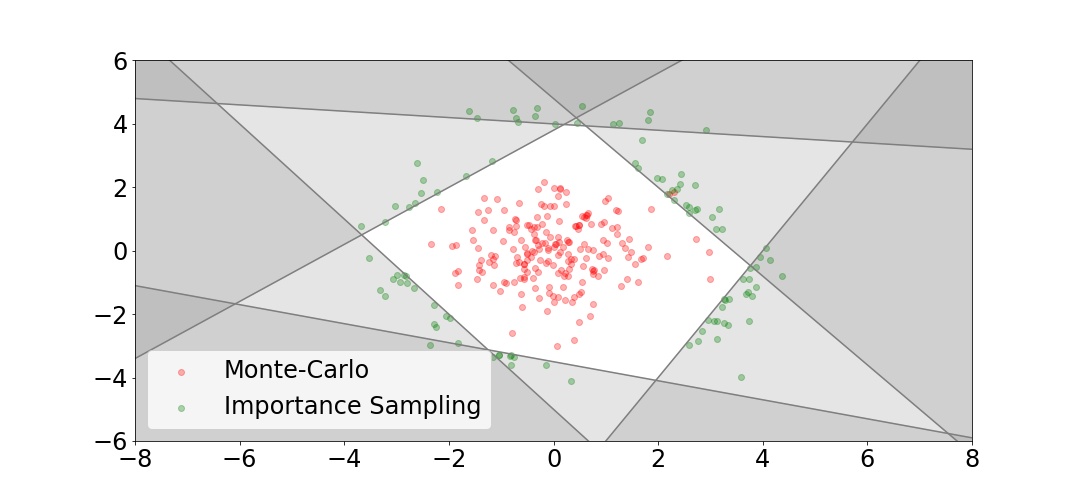}
    \vspace{-6mm}
    \caption{White area stands for generations which do not lead to a failure. A set of generations leading to at least one constraint violation is in grey. Two or more reliability constraints are not satisfied in dark grey area. Samples from a nominal distribution and the constructed mixture marked in red and green resp. 
    }
    \label{fig:00}
    \vspace{-5mm}
\end{figure}

Having a distribution mixture $\{D_i\}_{i=1}^J$we are looking for the weights $\{x_i\}_{i=1}^J$ to approximate a distribution $D$, $p \sim \cN(\mu, \Sigma)$ s.t. $p\not\in P$, in the optimal way. Note that, any positive $\{x_i\}_{i=1}^J$ weights lead to an unbiased estimate 
\begin{align}\label{eq:aloe}
    {\hat \Pi} = \frac{1}{N}\sum_{i=1}^N \frac{\upsilon(p^i)}{\upsilon_D(p^i, x^i)}, \quad p^i \sim D^{x^i}
\end{align}
of the probability $\Pi$. Here $x^i$ is the vector of mixture weights for $i^{\textup{th}}$ sample. Indeed, by linearity of the  expectation $\mathbb{E}_{\upsilon_D(p,x)} \hat \Pi = \frac{1}{n}\sum_{i\le n} \int \frac{\upsilon(p^i)1[p^i\not\in P]}{\upsilon_D(p^i, x)}\upsilon_D(p^i, x) dp = \Pi$. 

Despite being unbiased for any $x$ with positive components, variance of the  estimate~\eqref{eq:aloe} highly depends on the choice of $x$.
In \cite{owen2019importance} the authors suggested to take $x_i \propto \Pi_i$. 
While it leads to a consistent estimate, the estimator's variance is still high, especially when violation of multiple constraints is likely to happen in the system \cite{owen2019importance}. In practice, it leads to high sample complexity of the estimator which compromises its real time application. In the next subsection, we significantly improve the sampler's efficiency by using convex optimization to find the optimal combination of the mixture distribution weights $x$. 

\subsection{Convexity of Importance Sampling Variance}

We will measure the effectiveness of our estimator by its mean squared error, which is equal to the variance since the estimator is unbiased.
The importance sampler variance is 
\begin{align*}
    \bV_{\upsilon_D(p, x)}\left(\frac{\upsilon(p) f(p)}{\upsilon_D(p, x)}\right) & = \int_{p\in\mathbb{R}^n} \left(\frac{f(p)\upsilon(p)}{\upsilon_D(p, x)} - \Pi\right)^2 \upsilon_D(p, x) d p \\
    & = \int_{p\in\mathbb{R}^n} \frac{f^2(p)\upsilon^2(p)}{\upsilon_D(p, x)} d p - \Pi^2 =: V(x), 
\end{align*}
where $f(p) = 1[p\not\in P]$. 

The optimal synthetic distribution can be chosen to minimize the variance, $\upsilon_*(p) = f(p)\upsilon(p)/\Pi$, and thus provide a better approximation to the integral. Notice, that for $\upsilon_D = \upsilon_*$ the variance $V(x) = 0$ and attains its minimum.  However, if $\upsilon_*(p)$ does not belong to the parametric family $\{\upsilon_D(p,x)\}_x$, we are looking for the best approximation of $\upsilon_*(p)$ within it, i.e. a minimum possible value of $V(x)$. 

Figure~\ref{fig:fscheme} illustrates our approach. Starting from an initial weight assignment, $x_i^1 \propto \Pi_i$, at each iteration $N$ of the algorithm we sample $p^k \sim \upsilon_D(\cdot, x^k)$ and compute $x^{k+1}$ to minimize the variance of the estimate. We also update an empirical estimate to the probability 
\begin{align}
    \hat \Pi = \frac{1}{k}\sum_{t=1}^k \frac{\upsilon(p^t) f(p^t)}{\upsilon_D(p^t, x^t)}, \; p^t \sim \upsilon_D(\cdot, x^t), t\le k \label{eq:emp}
\end{align}
and update parameters $x$ based on the value of $V(x^N)$ and its gradient. Before discussing the update strategy for the parameters $x$, we  outline some important properties of the variance $V(x)$ and empirical estimate~\eqref{eq:emp}. 

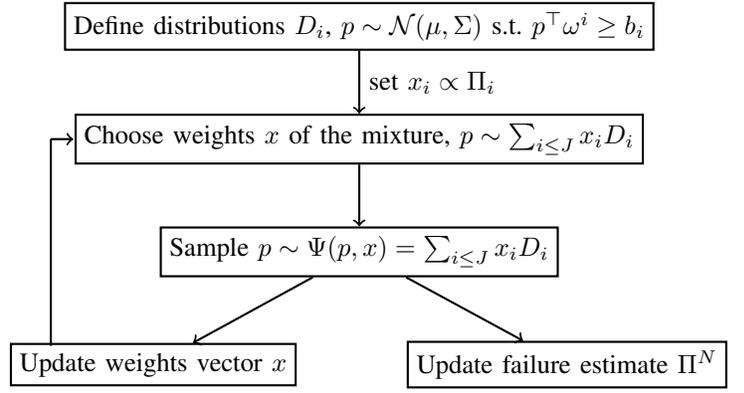
\begin{figure}
    \centering
    \begin{tikzpicture}[node distance={15mm}, thick, main/.style = {draw}] 
    \node[main] (a)  {Define distributions $D_i$, \par $p\sim \cN(\mu, \Sigma)$ s.t. $p^\top \omega^i\ge b_i$}; 
    \node[main] (b) [below of=a] {Choose weights $x$ of the mixture, $p\sim\sum_{i\le J}x_i D_i$}; 
    \node[main] (c) [below of=b] {Sample $p \sim \Psi(p, x) = \sum_{i\le J }x_i D_i$}; 
    \node (x) [below of=c]{};
    \node[main] (d) [left =  7 mm of x] {Update weights vector $x$};
    \node[main] (e) [right = 5 mm of x] {Update failure estimate $\Pi^N$}; 
    \draw[->] (a) -- (b) node[midway, right] {set $x_i\propto \Pi_i$};
    \draw[->] (b) -- (c);
    \draw[<-] (-3.8,-1.5) -- (-4.1,-1.5);
    \draw[-] (-4.1,-4.25) -- (-4.1,-1.5);
    \draw[->] (c) -- (d); 
    \draw[->] (c) -- (e);
    %\draw[->] (d) -- (e);
    \end{tikzpicture} 
    \caption{Scheme of the proposed algorithm. First, we initialize distributions $D_i$ based on a given set of constraints. Second, we initialize the weights of a mixture distribution to sample  $x_i\propto \Pi_i$. Then, with each new sample from $\sum_{i\le J} x_i D_i$ we update the weight vector $x$ and a failure estimate $\Pi_n$. See Eq.~\eqref{eq:md-upd} for details.}
    \label{fig:fscheme}
    \vspace{-4mm}
\end{figure}

Theorem~\ref{thm:var-convexity} implies convexity of the variance minimization problem 
\begin{align}
    \min_x \; & V(x), \; 
    \text{s.t. } \sum_{i=1}^J x_i = 1, x_i \ge 0, 1\le i \le J\label{eq:var-min}
\end{align}
in $x$ for the mixture distribution $\upsilon_D(p,x) = \sum_{i=1}^J x_i \upsilon_i(p)$. To improve numerical stability one may also add constraints $x_i \ge \varepsilon > 0$ that guarantee that the variance is bounded.

\begin{theorem}\label{thm:var-convexity}
Optimization problem~\eqref{eq:var-min} is convex in $x$. Moreover, 
\begin{align*}
    \nabla V(x) = \mathbb{E}_{p \sim \upsilon_D(\cdot,x)} \biggl[ - \frac{f^2 (p)\upsilon^2(p)}{\upsilon_D^2(p,x)} (\upsilon_1(p), \dots, \upsilon_J(p))^\top\biggr],
\end{align*} 
for any $x > 0$. 
\end{theorem}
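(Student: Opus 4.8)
The plan is to obtain convexity from a pointwise (in $p$) property of the integrand and to obtain the gradient by differentiating under the integral sign and then rewriting the result as an expectation with respect to $\upsilon_D(\cdot,x)$. I would work from the identity established just before the theorem,
\[
V(x) = \int_{\mathbb{R}^n} \frac{f^2(p)\upsilon^2(p)}{\upsilon_D(p,x)}\,dp - \Pi^2, \qquad \upsilon_D(p,x) = \sum_{i=1}^J x_i \upsilon_i(p),
\]
in which $\Pi^2$ is a constant and $x\mapsto\upsilon_D(p,x)$ is affine for each fixed $p$. A preliminary remark I would record first: since the nominal Gaussian density $\upsilon$ is everywhere strictly positive and every $p\notin P$ violates at least one constraint $k$ (for which $\upsilon_k(p)=\upsilon(p)/\Pi_k>0$), we get $\upsilon_D(p,x)\ge x_k\upsilon_k(p)>0$ at every such $p$ whenever all $x_i>0$. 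Hence $\upsilon^2(p)/\upsilon_D(p,x)\le(\max_k\Pi_k)\,\upsilon(p)/\min_i x_i$ and, on the $\varepsilon$-truncated simplex, $\upsilon^2(p)/\upsilon_D^2(p,x)\le(\max_k\Pi_k/\varepsilon)^2$; these bounds make $V$ finite and will supply the dominating functions needed below.

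\textbf{Convexity.} Fix any $p$ with $f(p)=1$. Then $x\mapsto 1/\upsilon_D(p,x)$ is the composition of the convex, decreasing map $t\mapsto 1/t$ on $(0,\infty)$ with the affine map $x\mapsto\upsilon_D(p,x)$, hence convex on $\{x:\upsilon_D(p,x)>0\}$; multiplying by the nonnegative constant $f^2(p)\upsilon^2(p)$ preserves convexity. I would then integrate the pointwise convexity inequality over $p$ against Lebesgue measure — the integrand is nonnegative, so Tonelli lets me interchange the integral with the convex combination — to conclude that $x\mapsto\int f^2\upsilon^2/\upsilon_D(\cdot,x)\,dp$ is convex on the relative interior of the simplex; subtracting the constant $\Pi^2$ keeps it convex. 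Since the feasible set of \eqref{eq:var-min}, with or without the refinement $x_i\ge\varepsilon$, is a convex polytope, problem \eqref{eq:var-min} is a convex program.

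\textbf{Gradient.} Here I would differentiate under the integral sign. Pointwise, $\partial_{x_j}\bigl(f^2\upsilon^2/\upsilon_D\bigr)=-f^2\upsilon^2\upsilon_j/\upsilon_D^2$, and on a neighborhood of $x$ staying a fixed distance $\varepsilon$ from the faces of the simplex the bound above yields $\bigl|\partial_{x_j}(f^2\upsilon^2/\upsilon_D)\bigr|\le C\,\upsilon_j$ with $C$ constant, while $\int\upsilon_j\,dp=1$ because $\upsilon_j$ is a density; so the Leibniz rule applies and
\[
\nabla V(x) = -\int_{\mathbb{R}^n} \frac{f^2(p)\upsilon^2(p)}{\upsilon_D^2(p,x)}\bigl(\upsilon_1(p),\dots,\upsilon_J(p)\bigr)^\top dp .
\]
Inserting the factor $\upsilon_D(p,x)/\upsilon_D(p,x)$ turns this integral into an expectation of the corresponding importance-weighted vector under $p\sim\upsilon_D(\cdot,x)$, which is the stated form of $\nabla V(x)$ and, crucially for the algorithm, is estimable by Monte-Carlo from the samples already drawn at the current $x$.

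\textbf{Expected main obstacle.} The convexity half is essentially the elementary fact that reciprocals of affine functions are convex. The real work — and where I expect the argument to need care — is the analysis making $V$ finite and licensing the interchange of differentiation and integration: all of it hinges on keeping $1/\upsilon_D$ under control near the boundary of the simplex, which is exactly what the safeguard $x_i\ge\varepsilon$ buys, and on verifying the measurability and integrability hypotheses so that Tonelli and the Leibniz rule genuinely apply.
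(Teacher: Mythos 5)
Your proposal is correct and takes essentially the same route as the paper: pointwise convexity in $x$ of $f^2(p)\upsilon^2(p)/\upsilon_D(p,x)$ (you via the reciprocal-of-an-affine-map composition, the paper via the rank-one Hessian $2f^2\upsilon^2\,h^\top h/\upsilon_D^3\succeq 0$) followed by differentiation under the integral sign, for which you make explicit the dominating function $C\,\upsilon_j$ that the paper only invokes tersely through dominated convergence. The one mismatch — your (correct) gradient $-\int f^2\upsilon^2\,(\upsilon_1,\dots,\upsilon_J)^\top/\upsilon_D^2\,dp$, rewritten as an expectation under $\upsilon_D(\cdot,x)$, carries $\upsilon_D^3$ rather than $\upsilon_D^2$ in the denominator — reflects a loose use of the expectation notation already present in the theorem statement and the paper's own proof, not a gap in your argument.
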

\begin{proof}
The sub-integral expression, $f^2(p)\upsilon^2(p)/\upsilon_D(p,x)$, is convex for any $x$, which implies the integral's convexity. Indeed, the Hessian of the sub-integral expression is non-negative for any $x$ 
\begin{align*}
    \nabla^2_x \left[f^2(p)\upsilon^2(p)/\upsilon_D(p,x)\right] = 2\frac{f^2(p)\upsilon^2(p)}{\upsilon^3_D(p,x)} h^\top h \succeq 0,
\end{align*}
where $h = (\upsilon_1(p), \dots, \upsilon_J(p)) = \nabla_x \upsilon_D(p,x)$. Finally, by the dominated convergence theorem as $\mathbb{E}_{p \sim \upsilon_D(\cdot,x)} \nabla_x\left[f^2(p)\upsilon^2(p)/\upsilon_D(p,x)\right]$ is finite for every $x>0$, one can exchange the order of differentiation and integration and 
\begin{align*}
\mathbb{E}_{p \sim \upsilon_D(\cdot,x)} & 
\left[- \frac{f^2(p)\upsilon^2(p)}{\upsilon_D^2(p,x)}h^\top\right] = 
\mathbb{E}_{p \sim \upsilon_D(\cdot,x)}  \nabla_x\left[\frac{f^2(p)\upsilon^2(p)}{\upsilon_D(p,x)}\right] \\
& = 
    \nabla_x \mathbb{E}_{p \sim \upsilon_D(\cdot,x)}  \left[\frac{f^2(p)\upsilon^2(p)}{\upsilon_D(p,x)}\right] = \nabla V(x),
\end{align*}
which concludes the proof of the theorem. 
\end{proof}

\begin{theorem}\label{thm:unbias}
$\hat \Pi$ is an unbiased estimate of $\Pi$ if for all $k$, $1\le k \le N$, $x_k > 0$ and $x_k$ is independent of $x^j$ and $p^j$ for~$N\ge j > k\ge 1$.
\end{theorem}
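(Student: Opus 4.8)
The plan is to prove $\mathbb{E}[\hat\Pi]=\Pi$ term by term, using the tower property of conditional expectation to treat each adaptively chosen weight vector $x^k$ as if it were a deterministic parameter inside an inner conditional expectation. By linearity of expectation applied to \eqref{eq:emp}, it suffices to show that $\mathbb{E}\bigl[\upsilon(p^k)f(p^k)/\upsilon_D(p^k,x^k)\bigr]=\Pi$ for each $k$ with $1\le k\le N$; averaging these $N$ identities then yields the theorem.

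First I would fix $k$ and condition on the $\sigma$-algebra $\mathcal{G}_k$ generated by everything available up to the moment $x^k$ is chosen, i.e. $\mathcal{G}_k=\sigma(x^1,p^1,\dots,x^{k-1},p^{k-1},x^k)$. Two facts let $x^k$ act like a constant under $\mathbb{E}[\,\cdot\mid\mathcal{G}_k]$: (i) $x^k$ is $\mathcal{G}_k$-measurable by construction; and (ii) by the sampling model, conditionally on $\mathcal{G}_k$ the next draw $p^k$ has law $\upsilon_D(\cdot,x^k)$. This is precisely where the hypothesis enters: the assumption that $x^k$ is independent of $x^j$ and $p^j$ for $j>k$ is the non-anticipating condition that makes the statement ``$p^k\mid\mathcal{G}_k\sim\upsilon_D(\cdot,x^k)$'' legitimate, since $x^k$ is then built only from the past and does not peek at $p^k$ or at anything downstream of it.

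Next I would carry out the inner computation. Because $x^k>0$ has all coordinates strictly positive, $\upsilon_D(p,x^k)=\sum_{i\le J}x^k_i\upsilon_i(p)>0$ for every $p\notin P$ (each such $p$ violates at least one constraint, so at least one $\upsilon_i(p)>0$), while for $p\in P$ we have $f(p)=0$ and the integrand vanishes. Hence $\upsilon(p)f(p)/\upsilon_D(p,x^k)$ is well defined $\upsilon_D(\cdot,x^k)$-a.e., and the change of measure cancels:
\begin{align*}
\mathbb{E}\Bigl[\frac{\upsilon(p^k)f(p^k)}{\upsilon_D(p^k,x^k)}\,\Bigm|\,\mathcal{G}_k\Bigr]
= \int_{\mathbb{R}^n}\frac{\upsilon(p)f(p)}{\upsilon_D(p,x^k)}\,\upsilon_D(p,x^k)\,dp
= \int_{\mathbb{R}^n}\upsilon(p)f(p)\,dp = \Pi,
\end{align*}
a deterministic constant. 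Taking expectations and applying the tower property gives $\mathbb{E}\bigl[\upsilon(p^k)f(p^k)/\upsilon_D(p^k,x^k)\bigr]=\mathbb{E}\bigl[\mathbb{E}[\,\cdot\mid\mathcal{G}_k]\bigr]=\Pi$, and summing over $k$ and dividing by $N$ finishes the proof.

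The main obstacle I anticipate is not the algebra but the measure-theoretic bookkeeping: one must confirm that $\upsilon_D(\cdot,x^k)$ dominates $\upsilon(\cdot)f(\cdot)$ almost surely over the random realizations of $x^k$ (this is exactly where $x^k>0$ is needed, and where the $\varepsilon$-lower bound on the weights mentioned near \eqref{eq:var-min} makes the integrand uniformly bounded and the conditional expectation finite), and that the conditional law of $p^k$ given $\mathcal{G}_k$ really is $\upsilon_D(\cdot,x^k)$ even though $x^k$ is random. Once the filtration $\mathcal{G}_k$ is made precise and the independence hypothesis is invoked to justify the conditional-sampling statement, the remaining step — the cancellation $\upsilon_D/\upsilon_D=1$ under the integral — is routine.
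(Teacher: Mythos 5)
Your proof is correct and follows essentially the same route as the paper's: condition on the current weight vector $x^k$ (the paper writes this as the law of total expectation, you phrase it via the filtration $\mathcal{G}_k$ and the tower property), cancel $\upsilon_D/\upsilon_D$ under the integral to get $\Pi$ for each term, and average over $k$. Your additional care about $x^k>0$ guaranteeing $\upsilon_D(p,x^k)>0$ off $P$ and the non-anticipating role of the independence hypothesis is a sound elaboration of what the paper leaves implicit.
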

\begin{proof} 
Let $f(p) = 1[p\not\in P]$. By the law of total expectation
\begin{align*}
    \mathbb{E}\, {\hat\Pi} & = \frac{1}{n}\sum_{k=1}^n  \mathbb{E}\biggl[\mathbb{E}_{p^k\sim\upsilon_D(p, x^k)} \biggl[ \frac{\upsilon(p^k) f(p^k)}{\upsilon_D(p^k,x^k)}\bigg| x^k\biggr] \biggr] = \sum_{i=1}^n \frac{\Pi}{n}= \Pi,
\end{align*}
as $\mathbb{E}_{p^k} \biggl[ \frac{\upsilon(p^k)f(p^k)}{\upsilon_D(p^k,x^k)}\bigg| x^k\biggr] = \int_{\mathbb{R}^n} \frac{\upsilon(p^k)1[p^k \not\in P]}{\upsilon_D(p^k,x^k)}\upsilon_D(p^k,x^k) dp^k = \Pi$.
\end{proof}

According to Theorem~\ref{thm:unbias}, the importance sampling estimate is unbiased. Theorem \ref{thm:var} bounds the variance of $\hat\Pi$. 

\begin{theorem}\label{thm:var}
Variance of $\hat\Pi$ equals $N^{-2}\sum_{k=1}^N V(x^k)$ if for all $k$ and $j,$ $1\le k < j \le N$, $x_k > 0$ and $x_k$ is independent of $x^j$ and~$p^j$.
\end{theorem}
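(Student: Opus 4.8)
The plan is to write $\hat\Pi=\frac1N\sum_{k=1}^N Y_k$ with $Y_k:=\upsilon(p^k)f(p^k)/\upsilon_D(p^k,x^k)$ and $f(p)=1[p\notin P]$, and then expand
$\mathbb{V}(\hat\Pi)=\frac1{N^2}\sum_{k=1}^N\mathbb{V}(Y_k)+\frac1{N^2}\sum_{k\neq j}\mathrm{Cov}(Y_k,Y_j)$.
The claim then reduces to two things: every cross covariance vanishes, and each diagonal term equals $V(x^k)$ (read as $\mathbb{E}\,V(x^k)$ for the unconditional variance). The argument is structurally the same as the proof of Theorem~\ref{thm:unbias}, except that we must now exploit the stated independence of $x_k$ from the future to decorrelate the adaptively chosen terms.

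First I would fix the natural filtration $\mathcal F_k=\sigma(x^1,p^1,\dots,x^{k-1},p^{k-1},x^k)$, i.e. everything the algorithm has generated up to and including the $k$-th weight vector but before drawing $p^k$. Because $x^k>0$ and, by the sampling construction, $p^k\sim\upsilon_D(\cdot,x^k)$ conditionally on $\mathcal F_k$, and because $\upsilon_D(p,x^k)>0$ whenever $p\notin P$ (any such $p$ violates some constraint $i$, so $\upsilon_i(p)=\upsilon(p)/\Phi(\cdot)>0$ and $\upsilon_D(p,x^k)\ge x^k_i\upsilon_i(p)>0$), the same one-line change of variables as in Theorem~\ref{thm:unbias} gives $\mathbb{E}[Y_k\mid\mathcal F_k]=\int_{\mathbb R^n}\upsilon(p)\,1[p\notin P]\,dp=\Pi$, and squaring the integrand, $\mathbb{E}[Y_k^2\mid\mathcal F_k]=\int_{\mathbb R^n}\upsilon^2(p)f^2(p)/\upsilon_D(p,x^k)\,dp=V(x^k)+\Pi^2$, where the last equality is exactly the definition of $V(x)$ recalled just before Theorem~\ref{thm:var-convexity}.

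For a cross term with $k<j$, note that $Y_k$ is a function of $(p^k,x^k)$ and hence $\mathcal F_j$-measurable (since $k\le j-1$), while the independence of $x_k$ from $x^j,p^j$ and $p^j\sim\upsilon_D(\cdot,x^j)$ give $\mathbb{E}[Y_j\mid\mathcal F_j]=\Pi$ by the display above. The tower property then yields $\mathbb{E}[Y_kY_j]=\mathbb{E}\bigl[Y_k\,\mathbb{E}[Y_j\mid\mathcal F_j]\bigr]=\Pi\,\mathbb{E}[Y_k]=\Pi^2$, and since $\mathbb{E}[Y_k]=\mathbb{E}[Y_j]=\Pi$ by Theorem~\ref{thm:unbias}, $\mathrm{Cov}(Y_k,Y_j)=0$. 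For the diagonal, $\mathbb{V}(Y_k)=\mathbb{E}[Y_k^2]-\Pi^2=\mathbb{E}\bigl[\mathbb{E}[Y_k^2\mid\mathcal F_k]\bigr]-\Pi^2=\mathbb{E}\,V(x^k)$. Summing gives $\mathbb{V}(\hat\Pi)=N^{-2}\sum_{k=1}^N\mathbb{E}\,V(x^k)$, which is the stated identity under the conditional-on-the-weight-sequence reading (and literally $N^{-2}\sum_k V(x^k)$ when the weights are deterministic).

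The main obstacle is purely the dependence bookkeeping: the weights $x^k$ are chosen adaptively from past samples, so the $Y_k$ are not independent, and one has to arrange the filtration so that $x^j$ sits \emph{inside} the conditioning $\mathcal F_j$ while $p^j$ is still ``fresh'' — after that every moment computation collapses to the same integral identity used in Theorem~\ref{thm:unbias}. A secondary point to state explicitly is integrability: the expansion of $\mathbb{V}(\hat\Pi)$ and the use of Fubini/tower require $\mathbb{E}[Y_k^2]<\infty$, i.e. finiteness of $V(x^k)$, which is exactly what the constraint $x^k_i\ge\varepsilon>0$ (or $x^k>0$ together with compactness of the simplex) guarantees.
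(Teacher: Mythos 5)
Your proof is correct and follows essentially the same route as the paper's: decompose $\mathbb{V}(\hat\Pi)$ into diagonal and cross terms, condition on the weight vector via the tower property so that each diagonal term yields $V(x^k)$ and each cross term vanishes by the assumed independence. You are in fact more careful than the paper on one point — making the filtration explicit and noting that with adaptively (randomly) chosen weights the identity should be read as $N^{-2}\sum_k \mathbb{E}\,V(x^k)$ (or conditionally on the weight sequence), a subtlety the paper's own proof glosses over.
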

\begin{proof} Let $f(p) = 1[p\not\in P]$. As $\mathbb{V} (\hat\Pi)  = \mathbb{E}(\hat\Pi - \Pi)^2$ one has 
\begin{align*}
    \mathbb{V} & = \frac{1}{N^2} \sum_{k=1}^N \mathbb{E}\biggl( \frac{\upsilon(p^k)f(p^k)}{\upsilon_D(p^k, x^i)} - \Pi\biggr)^2
    \\ & 
    \; + \frac{2}{N^2}\sum_{k < j} \mathbb{E} \biggl(\frac{\upsilon(p^k)f(p^k)}{\upsilon_D(p^k, x^k)} - \Pi\biggr)\biggl(\frac{\upsilon(p^j)f(p^j)}{\upsilon_D(p^j, x^j)} - \Pi\biggr)\\
    & = \frac{1}{N^2}\sum_{k=1}^N \mathbb{E}\biggl[\mathbb{E}\biggl[\biggl(\frac{\upsilon(p^k)f(p^k)}{\upsilon_D(p^k,x^k)} - \Pi\biggr)\big| x^k\biggr]\biggr] \\
        & + \frac{2}{N^2}\sum_{k < j} \mathbb{E}\mathbb{E}\biggl[ \biggl(\frac{\upsilon(p^k)f(p^i)}{\upsilon_D(p^i, x^k)} - \Pi\biggr)\!\!\biggl(\frac{\upsilon(p^j)f(p^j)}{\upsilon_D(p^j, x^j)} - \Pi\biggr)\big|x^k\biggr],
\end{align*}
where the latter is equal to $N^{-2}\sum_{k=1}^N V(x^k)$.
\end{proof}

In the next section, we present a numerical method that guarantees convergence of $V(x^N)$ to the optimal value $V^*$ with an additive error~$O(1/\sqrt{N})$.

\subsection{Numerical Method}\label{sec:nm}

In this section we focus on efficient numerical methods for minimizing variance which, therefore, accelerate convergence of the importance sampling procedure. The mirror descent \cite{nemirovski2009robust} is known for its efficiency for simplex-constrained minimization problems. Its particular advantage compared to the stochastic gradient descent~\cite{ryu2014adaptive} and other optimization algorithms is only a logarithmic dependence on the problem dimension.

The mirror descent update for solving 
\begin{align}\label{eq:md-problem}
    \min_{x \ge 0, \sum_{i\le J} x_i = 1} V(x)
\end{align}
is an iterative modification of a point $x_k$ according to
\begin{align}\label{eq:md-upd}
    x_{k+1} = \argmin_{\substack{x \ge 0\\ \sum_{i\le J} x_i = 1}}\left\{\eta^k \nabla V(x^k)^\top (x - x^k) + D_\omega(x, x^k)\right\},
\end{align}
where $\eta^k > 0$ is a step-size, and $D_\omega(x, x^k)$ is the Bregman divergence which is defined for any strongly convex and smooth (distance generating) function $\omega$ as
\[
    D_\omega(x, x^k) = \omega(x) - \{\omega(x^k) + \nabla \omega(x^k)^\top (x^k - x)\}.
\]
So as the distance generating function is strongly convex and smooth in $x,$ so is the Bregman divergence. When $\omega(x) = \|x\|_2^2/2,$ mirror descent step is the same as in the gradient descent method, $x^{k+1} = x_k - \eta^k \nabla V(x^k)$. However, the negative entropy, $\omega(x) = -\sum_{i=1}^n x_i \log x_i$, is known to be the optimal choice for simplex constrained optimization. Solving~Eq.~\eqref{eq:md-upd} in $x$ leads to an update 
\[x^{k+1}_i = x^k_i \frac{\exp(-\eta^k(\nabla V(x^k))_i)}{\sum_{i=1}^J x_k \exp(-\eta^k(\nabla V(x^k))_i)}, \eta^k > 0\]
for $k\ge 1$ and $1\le i \le J$. 

Finally, upon minimizing stochastic objective $V(x),$ the expectation of the gradient is inaccessible, so one substitutes $\nabla V(x)$ with a stochastic gradient that comes from the uncertainty realization~$p$,
\[
    g(x, p) =  - \frac{f^2 (p)\upsilon^2(p)}{\upsilon_D^2(p,x)} h^\top, \; \mathbb{E}_{p\sim \upsilon_D}g(x, p) = \nabla V(x),
\]
where $h = (\upsilon_1(p), \dots, \upsilon_k(p))$. Finally 
\begin{align*}
    \upsilon_D(p,x)/\upsilon(p) = \sum_{i\le d} x_i 1[p^\top \omega^i \ge b_i],
\end{align*}
and $f(p) = 1[p\not\in P] = 1 $ for any $p$ sampled from $\upsilon_D$. 
Thus 
\begin{align}\label{eq:_upd}
    x^{k+1}_i = x^k_i \frac{\exp\left(\frac{\eta^k \upsilon(p) 1[p^\top \omega^i > b_i]}{\sum_{i\le J} x_i 1[p^\top \omega^i \ge b_i]}\right)}{\sum_{i=1}^J x_i^k \exp\left(\frac{\eta^k \upsilon(p) 1[p^\top \omega^i > b_i]}{\sum_{i\le d} x_i 1[p^\top \omega^i \ge b_i]}\right)}, 1\le i \le J
\end{align}

Theorem~\ref{thm:lan} is a restatement of \cite[Theorem 4.1.]{lan2020first} which establishes the convergence rate of the mirror descent algorithm. 

\begin{theorem}\label{thm:lan}
    For any function $V(x)$ that is $M$-Lipschitz in $\ell_1$ norm, i.e. $\|V(x) - V(y)\|_\infty \le M \|x-y\|_1 \forall x,y$, a constant step-size policy $\eta^k = \eta \le 1/M$, and 
    a sequence $\{x^k\}_{k\ge 1}$ generated by \eqref{eq:md-upd} with $\omega(x) = \sum_{i\le J} x_i\log x_i$, one has
    \begin{align*}
        \frac{1}{N}\sum_{i=1}^N (V(x^i)  - V^*) \le 
        \frac{\log J + (M^2 + \sigma^2) N \eta^2}{N\eta} ,
    \end{align*}
where $\mathbb{E}_{p\sim\upsilon_D}\|g(p,x)- \nabla V(x)\|_\infty^2 \le \sigma^2$, and $V_*$ is the optimal value of Problem~\eqref{eq:md-problem}. 
\end{theorem}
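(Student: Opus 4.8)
Since Theorem~\ref{thm:lan} is a restatement of \cite[Theorem~4.1]{lan2020first}, the most economical route is simply to invoke that result with the negative-entropy distance-generating function; for completeness I sketch the self-contained argument, which is the textbook analysis of stochastic mirror descent over the probability simplex. Write $\Delta = \{x\ge 0:\sum_{i\le J}x_i = 1\}$, let $g_k = g(p^k,x^k)$ be the stochastic gradient used at step $k$, $\xi_k = g_k - \nabla V(x^k)$ the noise, and $\mathcal{F}_k$ the $\sigma$-algebra generated by $p^1,\dots,p^{k-1}$. Because $p^k$ is drawn afresh from $\upsilon_D(\cdot,x^k)$, the vector $x^k$ is $\mathcal{F}_k$-measurable, $\mathbb{E}[g_k\mid\mathcal{F}_k] = \nabla V(x^k)$, and $\mathbb{E}[\|\xi_k\|_\infty^2\mid\mathcal{F}_k]\le\sigma^2$. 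Two standard facts about $\omega(x)=\sum_{i\le J}x_i\log x_i$ are used: it is $1$-strongly convex on $\Delta$ with respect to $\|\cdot\|_1$ (Pinsker's inequality), so $D_\omega(y,z)\ge\tfrac12\|y-z\|_1^2$; and $\max_{x\in\Delta}D_\omega(x,x^1) = \log J$ when $x^1=(1/J,\dots,1/J)$, which is the origin of the $\log J$ term (for another initialization it becomes $D_\omega(x^*,x^1)$).

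The core is a one-step inequality. The first-order optimality condition of the update \eqref{eq:md-upd}, combined with the three-point identity $\langle\nabla\omega(x^k)-\nabla\omega(x^{k+1}),\,x^{k+1}-x\rangle = D_\omega(x,x^{k+1}) + D_\omega(x^{k+1},x^k) - D_\omega(x,x^k)$, gives for every $x\in\Delta$
\[
\eta\,\langle g_k,\, x^{k+1}-x\rangle \;\le\; D_\omega(x,x^k) - D_\omega(x,x^{k+1}) - D_\omega(x^{k+1},x^k).
\]
Splitting $\langle g_k, x^k-x\rangle = \langle g_k, x^k-x^{k+1}\rangle + \langle g_k, x^{k+1}-x\rangle$, bounding $\eta\langle g_k, x^k-x^{k+1}\rangle \le \tfrac{\eta^2}{2}\|g_k\|_\infty^2 + \tfrac12\|x^k-x^{k+1}\|_1^2$ (the bound $\langle g_k,v\rangle\le\|g_k\|_\infty\|v\|_1$ and Young's inequality), and absorbing $\tfrac12\|x^k-x^{k+1}\|_1^2$ into $-D_\omega(x^{k+1},x^k)$ by strong convexity yields $\eta\langle g_k, x^k-x\rangle \le D_\omega(x,x^k) - D_\omega(x,x^{k+1}) + \tfrac{\eta^2}{2}\|g_k\|_\infty^2$. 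Setting $x=x^*\in\argmin_{\Delta}V$, summing over $k=1,\dots,N$, telescoping and dropping $-D_\omega(x^*,x^{N+1})\le 0$ gives $\eta\sum_{k=1}^N\langle g_k,x^k-x^*\rangle \le \log J + \tfrac{\eta^2}{2}\sum_{k=1}^N\|g_k\|_\infty^2$. Taking expectations: on the left, since $x^k$ is $\mathcal{F}_k$-measurable and $\mathbb{E}[\xi_k\mid\mathcal{F}_k]=0$, the noise term is a martingale difference, so $\mathbb{E}\langle g_k,x^k-x^*\rangle = \mathbb{E}\langle\nabla V(x^k),x^k-x^*\rangle \ge \mathbb{E}[V(x^k)-V^*]$ by convexity of $V$ (Theorem~\ref{thm:var-convexity}); on the right, $\mathbb{E}\|g_k\|_\infty^2 \le 2\mathbb{E}\|\nabla V(x^k)\|_\infty^2 + 2\mathbb{E}\|\xi_k\|_\infty^2 \le 2M^2+2\sigma^2$, using $\|\nabla V\|_\infty\le M$ (the dual form of the $\ell_1$-Lipschitz hypothesis). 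Dividing by $N\eta$ produces exactly
\[
\frac1N\sum_{i=1}^N\bigl(V(x^i)-V^*\bigr) \;\le\; \frac{\log J + (M^2+\sigma^2)N\eta^2}{N\eta},
\]
with $\eta\le 1/M$ retained to match the cited statement, although the pure-subgradient bound above does not otherwise rely on it.

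There is no genuine obstacle here — it is the classical mirror-descent computation — so the only points needing care are bookkeeping: (i) that $x^k$ depends only on $p^1,\dots,p^{k-1}$, which is precisely the independence assumption already used in Theorems~\ref{thm:unbias}--\ref{thm:var}, so that $\langle\xi_k, x^k-x^*\rangle$ vanishes in expectation; and (ii) tracking constants so the factor $2$ from $\|a+b\|_\infty^2\le 2\|a\|_\infty^2+2\|b\|_\infty^2$ cancels the $\tfrac12$ from Young's inequality and the stated $M^2+\sigma^2$ (rather than a larger constant) results. Given this, the cleanest presentation is to note that Theorem~\ref{thm:lan} is the specialization of \cite[Theorem~4.1]{lan2020first} to the simplex with $\omega(x)=\sum_{i\le J}x_i\log x_i$.
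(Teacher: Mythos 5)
Your proposal is correct and matches the paper's route: the paper gives no proof of Theorem~\ref{thm:lan} beyond invoking \cite[Theorem~4.1]{lan2020first}, and your self-contained sketch is exactly the standard stochastic mirror-descent argument (three-point identity, Pinsker strong convexity of negative entropy, Young's inequality, martingale-difference noise) that underlies that cited result, with the constants bookkept correctly so that $(M^2+\sigma^2)N\eta^2$ appears as stated. Your parenthetical caveat that the $\log J$ term presumes the uniform prox-center, whereas the algorithm initializes $x^1_i\propto\Pi_i$ (giving $D_\omega(x^*,x^1)$ in general), is a fair observation about the paper's statement rather than a gap in your argument.
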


In our study, function $V(x)$ is $M$-Lipschitz for $x\ge \varepsilon$ with 
\[
M \le \|\nabla V(x)\|_\infty \le \int_{\mathbb{R}^n} \frac{f^2(p)\upsilon^2(p)}{\upsilon_D(p,x)^2} \upsilon(p) dp \le \Pi/\varepsilon, 
\]
and %$\mathbb{E}_{p\sim \upsilon_D} & \|g(p,x) - \nabla V(x)\|_\infty^2 \le 2\mathbb{E} \biggl\|\frac{f^2(p)\upsilon^3(p)}{\upsilon^2(p,x)} h\biggr\|_\infty^2  + 2\mathbb{E} \|\nabla V(x)\|_\infty^2 \le 4 \Pi^2/\varepsilon^2$.
\begin{align*}
    \mathbb{E}_{p\sim \upsilon_D} & \|g(p,x) - \nabla V(x)\|_\infty^2 
    = \mathbb{E} \biggl\|\frac{f^2(p)\upsilon^3(p)}{\upsilon^2(p,x)} h  - \nabla V(x)\biggr\|_\infty^2 \le \\
    & 2\mathbb{E} \biggl\|\frac{f^2(p)\upsilon^3(p)}{\upsilon^2(p,x)} h\biggr\|_\infty^2  + 2\mathbb{E} \|\nabla V(x)\|_\infty^2 \le 4 \Pi^2/\varepsilon^2. 
\end{align*}
To this end, according to Theorem~\ref{thm:lan} the optimal choice of $\eta = \varepsilon\Pi^{-1}\sqrt{\log J/(5N)} \le \varepsilon\min_{i\le d}\Pi_i^{-1}\sqrt{\log J/(5N)},$ which yields almost dimension independent convergence rate stated in Theorem~\ref{thm:md-c}. 
\begin{theorem}\label{thm:md-c} Mirror descent with stochastic update~\ref{eq:_upd} and a step-size policy $\eta^k = \eta \varepsilon\Pi^{-1}\sqrt{(\log J)/N}$,  $\eta \sqrt{N/\log J}\le 1$ yields
\begin{align*}
    \mathbb{V}_{\upsilon_D} (\hat \Pi) = \frac{1}{N}\sum_{k=1}^N V(x^k) < \frac{V^*}{N} +  \frac{\Pi \sqrt{\log J}}{\varepsilon \eta N^{3/2}} + 
    \frac{5\eta \Pi \sqrt{\log J}}{\varepsilon N^{3/2}}
    \eta, 
\end{align*}
where $V^*$ is the optimal value of Problem~\eqref{eq:md-problem}.
\end{theorem}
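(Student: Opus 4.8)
The plan is to chain three facts that are already available: the exact variance identity of Theorem~\ref{thm:var}, the problem-specific Lipschitz and gradient-noise bounds established in the paragraph preceding the statement, and the mirror-descent convergence rate of Theorem~\ref{thm:lan}.

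First I would reduce the claim to a statement about the averaged iterates. In the scheme~\eqref{eq:_upd} the weight vector $x^k$ used to draw $p^k$ is a deterministic function of $p^1,\dots,p^{k-1}$ alone, so the independence hypothesis of Theorem~\ref{thm:var} is satisfied and $\mathbb{V}_{\upsilon_D}(\hat\Pi)=N^{-2}\sum_{k=1}^N V(x^k)$ (correcting the evident $1/N$ versus $1/N^2$ slip in the displayed equality). It therefore suffices to bound $\tfrac1N\sum_{k=1}^N V(x^k)$ and divide by $N$ at the end.

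Next I would invoke Theorem~\ref{thm:lan} on the truncated simplex $\{x:\ x_i\ge\varepsilon,\ \sum_i x_i=1\}$, on which the estimates derived in the text give that $V$ is $M$-Lipschitz in the $\ell_1$ norm with $M\le\Pi/\varepsilon$ and that the stochastic gradient $g(p,x)$ of~\eqref{eq:_upd} obeys $\mathbb{E}_{p\sim\upsilon_D}\|g(p,x)-\nabla V(x)\|_\infty^2\le\sigma^2$ with $\sigma^2\le 4\Pi^2/\varepsilon^2$, hence $M^2+\sigma^2\le 5\Pi^2/\varepsilon^2$. Writing $\tilde\eta:=\eta\,\varepsilon\Pi^{-1}\sqrt{(\log J)/N}$ for the constant step actually used, the admissibility condition $\tilde\eta\le 1/M$ of Theorem~\ref{thm:lan} reduces to $\eta\sqrt{(\log J)/N}\le 1$, which holds under the stated assumption $\eta\sqrt{N/\log J}\le 1$ in the natural regime $N\ge\log J$. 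Theorem~\ref{thm:lan} then gives
\begin{align*}
\frac1N\sum_{k=1}^N\bigl(V(x^k)-V^*\bigr)\ \le\ \frac{\log J+(M^2+\sigma^2)N\tilde\eta^2}{N\tilde\eta}\ \le\ \frac{\log J}{N\tilde\eta}+\frac{5\Pi^2}{\varepsilon^2}\,\tilde\eta .
\end{align*}
Substituting $\tilde\eta=\eta\,\varepsilon\Pi^{-1}\sqrt{(\log J)/N}$ turns the first term into $\Pi\sqrt{\log J}/(\eta\,\varepsilon\sqrt N)$ and the second into $5\eta\,\Pi\sqrt{\log J}/(\varepsilon\sqrt N)$; adding $V^*$ and dividing by $N$ as in the previous step produces exactly the claimed inequality for $\mathbb{V}_{\upsilon_D}(\hat\Pi)$.

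I do not expect a genuinely hard step, since all the analytic content --- convexity of $V$ (Theorem~\ref{thm:var-convexity}), unbiasedness of $\hat\Pi$ and the variance-as-sum identity (Theorems~\ref{thm:unbias}--\ref{thm:var}), the uniform bounds on $M$ and $\sigma$, and the mirror-descent rate (Theorem~\ref{thm:lan}) --- is already in place. The two points that need care are (i) confirming that the adaptively generated iterates $x^k$ still meet the measurability/independence requirement of Theorem~\ref{thm:var}, and (ii) keeping track of the $\sqrt N$ and $1/N$ factors through the step-size substitution; the main ``obstacle'', such as it is, is the bookkeeping by which the $1/N$ average coming from Theorem~\ref{thm:lan} combines with the additional $1/N$ from the variance identity to yield the $N^{-3/2}$ scaling of the two error terms, and I would write this normalization out explicitly rather than leave it implicit.
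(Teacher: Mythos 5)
Your proposal is correct and follows essentially the argument the paper itself intends: the paper gives no separate proof of Theorem~\ref{thm:md-c}, but derives it exactly as you do, by combining the variance identity of Theorem~\ref{thm:var} with the mirror-descent rate of Theorem~\ref{thm:lan} using $M\le\Pi/\varepsilon$, $\sigma^2\le 4\Pi^2/\varepsilon^2$ and the stated step size. Your bookkeeping (including the $1/N$ versus $1/N^2$ normalization and the step-size admissibility check) matches the intended derivation, so no further changes are needed.
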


Compared to the earlier results of~\cite{owen2019importance}, the rate of convergence depends as $O(\sqrt{\log J})$ on the dimension $J$, while earlier results \cite{owen2019importance} claim linear at dependence. Thus, our result provides a substantial acceleration for large-scale problems.  

\section{Empirical Study}\label{sec:emp}

\subsection{Algorithms and implementation details}
We compare performance of importance samplers over real and simulated test cases whose dimensions vary from several dozens to several thousands variables. We limit the empirical setting to considering Gaussian distributions and linear constraints only.

\paragraph{Compared Algorithms} In this study, we have compared Monte-Carlo Sampling, ALOE \cite{owen2019importance}, pmvnorm \cite{genz2020package} and mirror descent for variance minimization (MD-Var). We have also applied the algorithms to the same setting with KL-divergence \cite{l2009monte} between the generated distribution $\upsilon_D$ and the optimal distribution $\upsilon^*_D$ as a measure of estimator's quality (instead of variance $V(x)$). This similarly leads to a convex optimization problem similar to~\cite{rubinstein2013cross}. The former and the latter are the proposed methods.  %On power grid cases, \text{pmvnorm} shows low performance and we do not include it in comparison. 

%\paragraph{Numerical stability}
%Mirror descent update rule imposes the question of numerical stability as $x^{k+1}$ can get below machine precision for small $x^k$ and a small update factor. 
%
%To avoid numerical instability, we freeze coordinates $x_i$ that correspond to initial probabilities $\Pi_i\leq 10^{-4}\times \max_{i\le J} \Pi_i$ and only update other coordinates.

\paragraph{Implementation details} We have used Python 3.8.5. and PandaPower~2.2.2~\cite{pandapower.2018} on MacBook Pro (2.4GHz, 8-Core Intel i9, 64 GB RAM). In the experiments computational time for each of the cases for MD-Var method have not exceeded two minutes, which makes the solution applicable for the operational practice. Our code is publicly available on Github\footnote{\url{https://github.com/vjugor1/adaptive_importance_sampling_power_grids}}. 

\subsection{Test cases and numerical results}

We evaluate our algorithms on multiple real (power grids) and simulated test cases. We estimate the probability of system failure, i.e. the probability that at least one of the realibility constraints fails. Assuming Gaussian fluctuations of output power of renewables, the  probability equals to the Gaussian volume of the reliability polytope's complement $\mathbb{R}^n\setminus P$, as it was shown earlier. First, we conduct our experiments on the regular polytope, then we consider degenerate polytope. The latter is merely two parallel planes, one of them has a number of slightly shivered duplicates. This test assesses the stability of the algorithms and ability to handle joint geometry of the problem. Finally, we apply the proposed algorithms to various power grids. 

\paragraph{Regular polytope}
We consider a regular 2 dimensional polytope with $J$ faces ($J\ge 3$) centered at zero, $
    P = \{p \in \mathbb{R}^2: \omega_j^\top x \leq \tau, 1\le j \le J\},
$
where $\omega_j = (\sin(2\pi j/J), \cos(2 \pi j/J))$. 
We assume $p\sim\cN(0, I_2)$, where $I_2$ is $2\times 2$ identity matrix. The probability $p\not\in P$ rapidly converges to $\exp(-\tau^2/2)$ as $J\to\infty$ \cite{owen2019importance}. %In Figure~\ref{fig:00} the probability of interest is the Gaussian volume of the white region's complement. 
%\textcolor{magenta}{Dasha stopped here} 
Figure~\ref{fig:01} compares performance of MC,  ALOE~\cite{owen2019importance}, mirror descent (Section~\ref{sec:nm}) minimizing variance (MD-Var) and KL-divergence (MD-KL) and pmvnorm~\cite{genz2020package} methods for $\tau = 6$ and $J = 360$. Figure~\ref{fig:01} shows the histogram of $\hat \Pi/\Pi$ for 100 runs of the algorithms on a sample size of 1000.%  We ran all algorithms 100 times with the sample size of 1000 and plotted the histogram of $\hat \Pi/\Pi.$ %along the $x$-axis, and a number of times the estimate takes value $x$ on the $y$-axis. 
 The MD-Var method demonstrates a slightly better performance then ALOE, while pmvnorm tends to significantly underestimate the probability of $p\not\in P$. Monte-Carlo sampling from the nominal distribution failed to generate any event $p\not\in P$ in $10^6$ tries, and estimated a failure probability $\Pi$ as zero.

\begin{figure}[t!]
    \centering
    \vspace{-3mm}
    \includegraphics[width=.45\textwidth]{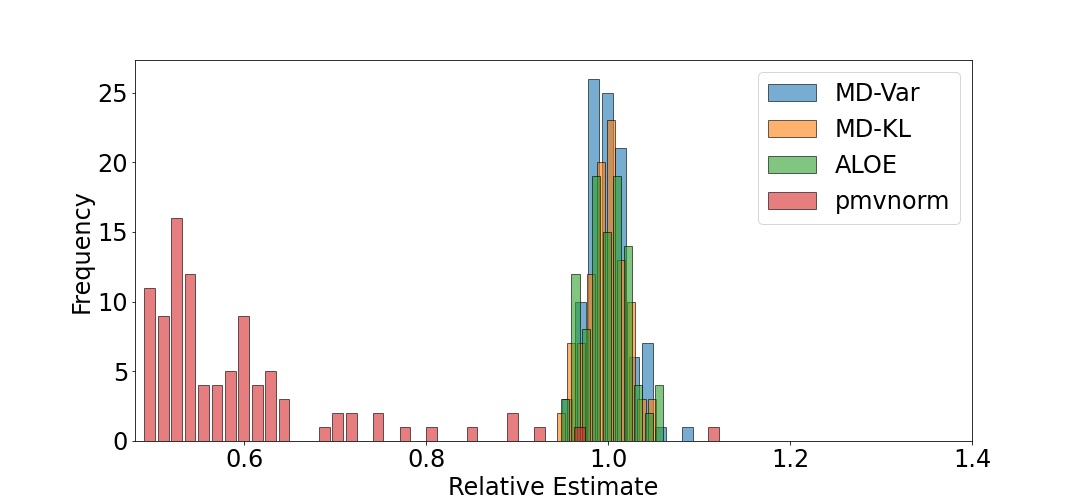}\hspace{-5mm}
    \caption{Importance 
    sampling methods performance on a 2-dimensional regular polytope with $360$ faces and failure probability $\Phi(-6)$.}
    \label{fig:01}
    \vspace{-5mm}
\end{figure}
\begin{figure}[t!]
    \centering
    \includegraphics[width=.45\textwidth]{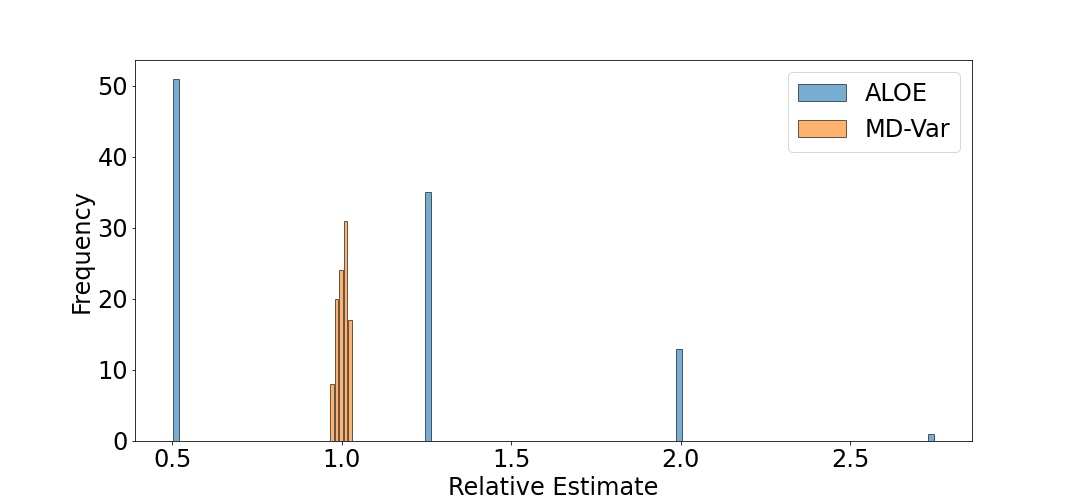}\hspace{-5mm}
    \caption{Importance 
    sampling methods performance on a 2-dimensional degenerate polytope with $1500$ faces and failure probability $2\Phi(-1)$.}
    \label{fig:degenerate_polytope}
    \vspace{-5mm}
\end{figure}

%Moreover, we would like to address the question of sampling complexity's dependence on the number of planes and true event probability. See Figure~\ref{fig:complexities}. We declare that the algorithm has converged once the relative difference between two subsequently produced standard deviations of the estimate are small enough: $|\texttt{std}_i - \texttt{std}_{i+1}| / \texttt{std}_{1000} \leq 10^{-3}$. 
%{\color{red} Sasha, please change it as we discuss. The criteria above does not make any sense because of the numerical instability}
%Here we can observe that the smaller probability than it is more samples required to estimate that for MC. ALOE and proposed methods, i.e., Mirror Descent based, seems to be not caring much about the true probability. Also, we observe that the proposed mirror descent based approaches are much more efficient.

\paragraph{Degenerate polytope} Although ALOE is one of the best choices for a regular polytope, the algorithm does not take into account the joint geometry of various hyperplanes. 
As the second example we consider a degenerate polytope with $J=1500$ faces, where $\omega^1 = (0, 1)$  and $\omega^j = (\xi, -1 - \xi)$, $2\le j \le J$. We take $\xi \sim \mathcal{U}[-\varepsilon, \varepsilon]$ for small $\varepsilon = 10^{-6}$. Note that $\omega^j$ for $j\geq 2$ are almost identical. Hence probability $\Pi$ is quite close to $2\Phi(-\tau)$. In this experiment, ALOE puts a lot of efforts on sampling points in the area $\{p: (0, -1)^\top p \ge \tau\}$, while the set $\{p: \omega_1^\top p \ge \tau\}$ remains unexplored which leads to a higher variance of the sampler and a less efficient method compared to the proposed optimization approach. Figure~\ref{fig:degenerate_polytope} illustrates the performance of ALOE in this case. 

\paragraph{Power Grid Cases} In this subsection we consider real-world polytopes corresponding to DC power grids (IEEE test cases) and Gaussian power injections.
%
%We also compare the algorithms' efficiency in solving the  on various IEEE test cases. s
%by comparing the algorithms' efficiency over various IEEE test cases. 
%
We ran the algorithms on all the test cases  accessible through PandaPower~\cite{pandapower.2018}. There were 27 cases with the number of buses varying from 4 to 9241. The proposed methods (MD-Var and MD-KL) took less than two minutes of computational time on a personal laptop for each of them. 

Table~\ref{tab:sample-compX} shows the minimal number of samples that are required by the algorithms to achieve 
\begin{align}
    \Pi/2 \le \hat\Pi \pm s(\hat\Pi) \le 3\Pi/2, \label{eq:tk}
\end{align}
where $s(\hat\Pi)$ is the empirical standard deviation of the estimate. This ensures that not only the estimated value, but also its confidence interval is contained in $(\Pi/2, 3\Pi/2)$ and that the sum of the empirical estimate and its standard deviation are close to %of the same order as 
the true probability.

\begin{table}[ht]
\centering
\vspace{-4mm}
\caption{Number of samples to satisfy Ineq.~\eqref{eq:tk} 
 for Iceland118}
 \begin{tabular}{c|c|c|c|c} 
 \hspace{-2mm}Bound ${\bar\theta}_{ij}$, failure prob. $\Pi$ & MC & ALOE & Pmvnorm & MD-Var\\
 % &&& \cite{genz2020package} & (Section \ref{sec:nm})\\
 \hline
  \hspace{-2mm}$|{\theta}_{ij}| \le \pi/8$, $\Pi$ = 1.2e-01 & 6.4e+02 & 3.7e+02 & \underline{3.2e+02} & 4.1e+02\\
  \hspace{-2mm}$|{\theta}_{ij}| \le \pi/7$, $\Pi$ = 3.0e-02 & 5.1e+04 & 4.1e+02 & 1.1e+03 & \underline{3.5e+02}\\
  \hspace{-2mm}$|{\theta}_{ij}| \le \pi/6$, $\Pi$ = 2.5e-03  & 6.2e+06 & 4.5e+02 & 6.3e+03 & \underline{3.9e+02} \\
  \hspace{-2mm}$|{\theta}_{ij}| \le \pi/5$, $\Pi$ = 2.6e-05  & 8.9e+10 & 3.3e+02 & 1.4e+04 & \underline{2.1e+02}\\
 \end{tabular}
 \vspace{-4mm}\label{tab:sample-compX}
\end{table}

Table~\ref{tab:emp} shows failure probability estimates and their standard deviations for the algorithms based on $N = 200$ samples on various PandaPower~\cite{pandapower.2018} power grids. In all the presented cases except for the Iceland grid, we set the standard deviations of output powers of generators to $0.25$ of their average values. For the Iceland test case, we use $0.1$ instead of $0.25.$ 
Pmvnorm~\cite{genz2020package} did not terminate on Polish 3120sp case after an hour of computations which we indicated as N/A. All other methods terminate in less than a minute. The proposed algorithms reduce variance and are more computationally efficient than the state-of-the-art ALOE and pmvnorm. Fig.~\ref{fig:weights118}~shows~a~substantial change in hyperplane weight assignment made by MD-Var.

\begin{figure}[t]
    \centering
    \vspace{-1mm}
    \includegraphics[width=.45\textwidth]{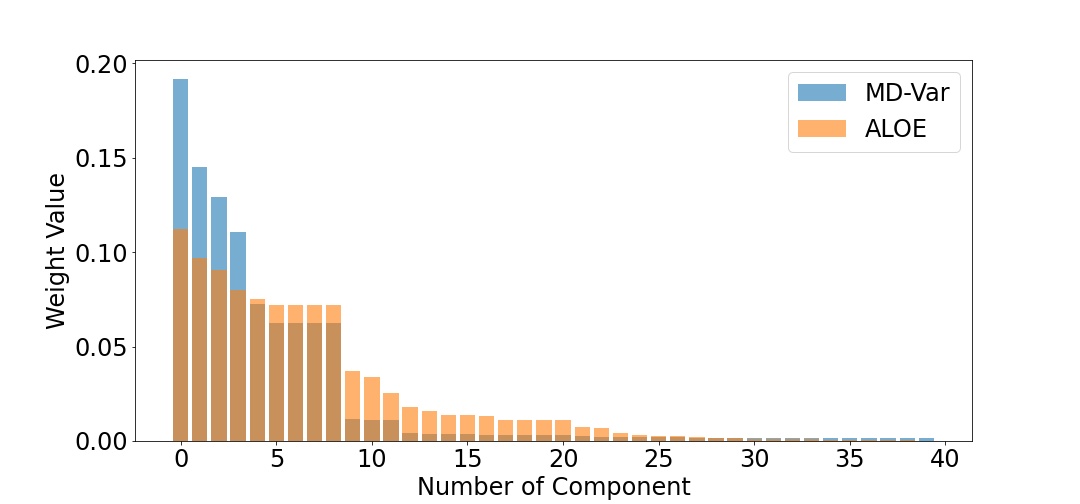}
    \vspace{-2mm}
    \caption{Weights of mixture distribution \eqref{eq:mix} assigned by ALOE and MD-Var. Iceland 118 case with maximum phase angle difference $\pi/3$ and a standard deviation of power injection on generators equal to $0.1$ of the average value.}
    \label{fig:weights118}
    \vspace{-4mm}
\end{figure}

\begin{comment}
Finally, as the ground truth we use an estimate of failure given after $N=10000$ iterations of ALOE~\cite{owen2019importance} and set $\Pi$ equal to it in Table~\ref{tab:emp}. We set a deviation on generators (except the slack bus) to be 0.1 of net power injections, i.e. the difference between power generation and consumption. Finally, we set variance on loads equals to zero. \end{comment}

\begin{table}[t]
    \centering
    \caption{Failure probability estimation for power grids.}
    \label{tab:emp}
    {\scriptsize{
    \begin{tabular}{l|l|l|l|l|l|l}
          Estimate, $\hat\Pi$ & $\bar\theta$ & $\Pi$ &  \text{ALOE} & MD-Var & MD-KL & pmvnorm\\
          \hline
          \multicolumn{7}{c}{IEEE 30}\\
          \hline
$\hat\Pi \times$ 1e+15 & $\pi/4$\hspace{-4mm}&8.2\!\! & 8.2 $\pm$ 0.9 & 8.2 $\pm$ 0.0 & 8.2 $\pm$  0.0 & 8.2 $\pm$ 1.2 \\
$\hat\Pi \times$ 1e+06 & $\pi/6$\hspace{-4mm}&5.8\!\! & 5.8 $\pm$ 0.7 & 5.8 $\pm$ 0.0 & 5.8 $\pm$ 0.0 & 5.8 $\pm$ 1.0 \\
$\hat\Pi \times$ 1e+04 & $\pi/7$\hspace{-4mm}&2.9\!\! & 2.9 $\pm$ 0.3 & 2.9 $\pm$ 0.0 & 2.9 $\pm$ 0.0 & 2.9 $\pm$ 0.5\\
$\hat\Pi \times$ 1e+03 & $\pi/8$\hspace{-4mm}&3.1\!\! & 3.1 $\pm$ 0.4 & 3.1 $\pm$ 0.0 & 3.1 $\pm$ 0.1 & 3.1 $\pm$ 0.4\\
\hline
\multicolumn{7}{c}{IEEE 57}\\
\hline
$\hat\Pi \times$ 1e+03 & $\pi/2$\hspace{-4mm}&8.8\!\! & 9.1 $\pm$ 0.8 & 8.7 $\pm$ 0.0 & 8.8 $\pm$ 0.0 & 8.9 $\pm$ 1.2\\
$\hat\Pi \times$ 1e+02 & $\pi/3$\hspace{-4mm}&8.4\!\! & 8.5 $\pm$ 1.1 & 8.4 $\pm$ 0.1 & 8.3 $\pm$ 0.5 & 9.0 $\pm$ 0.9\\
%$\hat\Pi \times$ 1e+01 & $\pi/4$\hspace{-4mm}&2.0\!\! & 2.0 $\pm$ 0.2 & 2.0 $\pm$ 0.0 & 1.9 $\pm$ 0.7 & 2.0 $\pm$ 0.0\\
%$\hat\Pi \times$ 1e+01 & $\pi/6$\hspace{-4mm}&4.1\!\! & 4.1 $\pm$ 0.3 & 4.2 $\pm$ 0.5 & 4.3 $\pm$ 1.0 & 4.1 $\pm$ 0.0\\
          \hline
          \multicolumn{7}{c}{Iceland 118}\\
          \hline
$\hat\Pi \times$ 1e+09 & $\pi/2$\hspace{-4mm}&6.2\!\! & 6.2 $\pm$ 0.1 & 6.1 $\pm$ 0.0 & 6.1 $\pm$ 0.0 & 5.7 $\pm$ 2.6\\
$\hat\Pi \times$ 1e+04 & $\pi/3$\hspace{-4mm}&2.8\!\! & 3.0 $\pm$0.0 & 2.9 $\pm$ 0.0 & 2.9 $\pm$ 0.0 & 2.8 $\pm$ 1.4\\
$\hat\Pi \times$ 1e+02 & $\pi/4$\hspace{-4mm}&1.1\!\! & 1.1 $\pm$ 0.2 & 1.1 $\pm$ 0.0 & 1.1 $\pm$ 0.0 & 1.1 $\pm$ 0.2\\
$\hat\Pi \times$ 1e+01 & $\pi/6$\hspace{-4mm}&1.4\!\! & 1.4 $\pm$ 0.2 & 1.4 $\pm$ 0.0 & 1.3 $\pm$ 0.0 & 1.4 $\pm$ 0.1\\
\hline 
          \multicolumn{7}{c}{Illinois 200}\\
\hline
$\hat\Pi\times$ 1e+12 & $\pi/4$\hspace{-4mm}&7.9\!\! & 7.9 $\pm$ 0.9 & 7.9 $\pm$ 0.0 & 7.9 $\pm$ 0.0 & 7.9 $\pm$ 3.3\\
$\hat\Pi\times$ 1e+04 & $\pi/6$\hspace{-4mm}&1.1\!\! & 1.1 $\pm$ 0.1 & 1.1 $\pm$ 0.0 & 1.1 $\pm$ 0.0 & 1.1 $\pm$ 0.3\\
$\hat\Pi\times$ 1e+03 & $\pi/7$\hspace{-4mm}&2.3\!\! & 2.3 $\pm$ 0.2 & 2.3 $\pm$ 0.0 & 2.3 $\pm$ 0.0 & 2.3 $\pm$ 0.3\\
$\hat\Pi\times$ 1e+02 & $\pi/8$\hspace{-4mm}&1.5\!\! & 1.5 $\pm$ 0.1 & 1.5 $\pm$ 0.0 & 1.5 $\pm$ 0.0 & 1.5 $\pm$ 0.1\\
\hline
\multicolumn{7}{c}{Polish 3120sp}\\
\hline
$\hat\Pi\times$ 1e+13 & $\pi/2$\hspace{-4mm}&3.7\!\! & 3.7 $\pm$ 0.4 & 3.7 $\pm$ 0.0 & 3.7 $\pm$ 0.0 & N/A\\
$\hat\Pi\times$ 1e+04 & $\pi/3$\hspace{-4mm}&1.2\!\! & 1.2 $\pm$ 0.1 & 1.2 $\pm$ 0.0 & 1.2 $\pm$ 0.0 & N/A\\
$\hat\Pi\times$ 1e+02 & $\pi/4$\hspace{-4mm}&3.4\!\! & 3.4 $\pm$ 0.5 & 3.4 $\pm$ 0.3 & 3.4 $\pm$ 0.6 & N/A
    \end{tabular}}
    }
    \vspace{-4mm}
\end{table}

\section{Conclusion}\label{sec:conclusion}

Importance sampling can be a useful tool for real-time reliability assessment in direct current power grids. We proposed an algorithm that, first, constructs a physics-informed mixture distribution for importance sampling, and, second, utilizes convex optimization to adjust the weights of the mixture.  The method outperforms state-of-the-art algorithms in accuracy and efficiency of reliability assessment. We hope that this approach can be further used for optimization and control in power grids. %importance sampling can ... used for control purposes

\bibliographystyle{IEEEtran}
\bibliography{biblio}

\end{document}